\theoremstyle{plain}
\newtheorem{thm}{Theorem}[section]
\newtheorem*{thm*}{Theorem}
\newtheorem{prop}{Proposition}[section]
\newtheorem*{prop*}{Proposition}
\newtheorem{cor}{Corollary}[section]
\newtheorem*{cor*}{Corollary}
\newtheorem*{lem*}{Lemma}
\theoremstyle{definition}
\newtheorem{defn}{Definition}[section]
\newtheorem*{defn*}{Definition}
\newtheorem*{exmp*}{Example}
\newtheorem*{exmps*}{Examples}
\newtheorem{rem}{Remark}[section]
\newtheorem*{rem*}{Remark}
\newtheorem*{rems*}{Remarks}
\newtheorem*{note*}{Note}
\newcommand{\Z}{{\mathbb Z}}
\newcommand{\R}{{\mathbb R}}
\newcommand{\C}{{\mathbb C}}
\DeclareMathOperator{\Rep}{Re}
\DeclareMathOperator{\orb}{orb}
\begin{document}
\title[On non-hypercyclicity of scalar type spectral operators]
{On the non-hypercyclicity\\
of scalar type spectral operators\\
and collections of their exponentials}
\author[Marat V. Markin]{Marat V. Markin}
\address{
Department of Mathematics\newline
California State University, Fresno\newline
5245 N. Backer Avenue, M/S PB 108\newline
Fresno, CA 93740-8001
}
\email{mmarkin@csufresno.edu}
\subjclass{Primary 47A16, 47B40; Secondary 47A10, 47B15, 47D06, 47D60, 34G10}
\keywords{Hypercyclicity, scalar type spectral operator, normal operator, $C_0$-se\-migroup, strongly continuous operator group}
\begin{abstract}
Generalizing the case of a normal operator in a complex Hilbert space, we give a straightforward proof of the non-hypercyclicity of a \textit{scalar type spectral operator} $A$ in a complex Banach space as well as of the collection $\left\{e^{tA}\right\}_{t\ge 0}$ of its exponentials, which, under a certain condition on the spectrum of the operator $A$, coincides with the $C_0$-semigroup generated by $A$. The spectrum
of $A$ lying on the imaginary axis, we also show that non-hypercyclic is the strongly continuous group 
$\left\{e^{tA}\right\}_{t\in {\mathbb R}}$ of bounded linear operators generated by $A$. From the general results, we infer that, in the complex Hilbert space $L_2({\mathbb R})$, the anti-self-adjoint differentiation operator $A:=\dfrac{d}{dx}$ with the domain $D(A):=W_2^1({\mathbb R})$ is non-hypercyclic and so is the left-translation strongly continuous unitary operator group generated by $A$.
\end{abstract}
\maketitle

\section[Introduction]{Introduction}

The concept of \textit{hypercyclicity}, underlying the theory of linear chaos, traditionally considered for \textit{continuous} linear operators on Fr\'echet spaces, in particular for \textit{bounded} linear operators on Banach spaces, and known to be a purely infinite-dimensional phenomenon (see, e.g., \cite{Grosse-Erdmann-Manguillot,Guirao-Montesinos-Zizler,Rolewicz1969}), is extended in \cite{B-Ch-S2001,deL-E-G-E2003} to \textit{unbounded} linear operators in Banach spaces, where also found are sufficient conditions for unbounded hypercyclicity and certain examples of hypercyclic unbounded linear differential operators.

\begin{defn}[Hypercyclicity]\ \\
Let
\[
A:X\supseteq D(A)\to X
\]
be a (bounded or unbounded) linear operator in a (real or complex) Banach space $X$ with a domain $D(A)$. 

A nonzero vector 
\begin{equation*}
f\in C^\infty(A):=\bigcap_{n=0}^{\infty}D(A^n)
\end{equation*}
($A^0:=I$, $I$ is the \textit{identity operator} on $X$)
is called \textit{hypercyclic} if its \textit{orbit}
under $A$
\[
\orb(f,A):=\left\{A^nf\right\}_{n\in\Z_+}
\] 
($\Z_+:=\left\{0,1,2,\dots\right\}$ is the set of nonnegative integers) is dense in $X$.

Linear operators possessing hypercyclic vectors are
said to be \textit{hypercyclic}.

More generally, a collection $\left\{T(t)\right\}_{t\in J}$ ($J$ is a nonempty indexing set) of linear operators in $X$ is called \textit{hypercyclic} if it possesses \textit{hypercyclic vectors}, i.e., such nonzero vectors $\displaystyle f\in \bigcap_{t\in J}D(T(t))$, whose \textit{orbit} 
\[
\left\{T(t)f\right\}_{t\in J}
\]
is dense in $X$.
\end{defn}

Cf. \cite{Markin2018(9),Markin2018(10)}.

As is easily seen, in the definition of hypercyclicity for a linear operator, the underlying space must necessarily be \textit{separable}. 

It is noteworthy that, for a hypercyclic linear operator $A$, the set $HC(A)$ of all its hypercyclic vectors, containing the dense orbit of any vector hypercyclic under $A$, is dense in $(X,\|\cdot\|)$, and hence, the more so, is the subspace $C^\infty(A)\supseteq HC(A)$.

Bounded \textit{normal operators} on a complex Hilbert space are known to be non-hypercyclic \cite[Corollary $5.31$]{Grosse-Erdmann-Manguillot}. In \cite{Mark-Sich2019(1)}, non-hypercyclicity is shown to hold for arbitrary normal operators (bounded or unbounded), certain collections of their exponentials, and symmetric operators.

Here, abandoning the comforts of a Hilbert space setting with its inherent orthogonality and self-duality, while generalizing non-hypercyclicity from normal to scalar type spectral operators, we furnish a straightforward proof of the non-hypercyclicity of an arbitrary \textit{scalar type spectral operator} $A$ (bounded or unbounded) in a complex Banach space as well as of the collection $\left\{e^{tA}\right\}_{t\ge 0}$ of its exponentials (see, e.g., \cite{Dunford1954,Survey58,Dun-SchIII}), which, provided the spectrum $\sigma(A)$ of the operator $A$ is located in a left half-plane
\[
\left\{\lambda\in \C\,\middle|\,\Rep\lambda\le \omega \right\}
\]
with some $\omega\in\R$, coincides with the $C_0$-\textit{semigroup} generated by $A$ 
\cite{Markin2002(2)} (see also \cite{Berkson1966,Panchapagesan1969}). The spectrum
of $A$ lying on the imaginary axis $i\R$ ($i$ is the \textit{imaginary unit}), we also show that non-hypercyclic is the strongly continuous group 
$\left\{e^{tA}\right\}_{t\in {\mathbb R}}$ of bounded linear operators generated by $A$. From the general results, we immediately infer that, in the complex Hilbert space $L_2({\mathbb R})$, the \textit{anti-self-adjoint} differentiation operator $A:=\dfrac{d}{dx}$ with the domain
\[
W_2^1({\mathbb R}):=\left\{f\in L_2(\R)\middle|f(\cdot)\ 
\text{is \textit{absolutely continuous} on $\R$ with}\ f'\in L_2(\R) \right\}
\] 
is non-hypercyclic and so is the left-translation strongly continuous unitary operator group generated by it \cite{Engel-Nagel,Hille-Phillips,Stone1932}.

\section[Preliminaries]{Preliminaries}

More extensive preliminaries concerning the \textit{scalar-type spectral operators} in complex Banach spaces, which, in particular, encompass \textit{normal operators} in complex Hilbert spaces \cite{Wermer} (see also \cite{Dun-SchII,Plesner}), can be found in the corresponding section of \cite{Markin2018(3)} (see also \cite{Dunford1954,Survey58,Dun-SchIII}). Here, we outline only a few facts indispensable for our subsequent discourse. 

With a {\it scalar type spectral operator} $A$ in a complex Banach space $(X,\|\cdot\|)$ associated are its \textit{spectral measure} (the \textit{resolution of the identity}) $E_A(\cdot)$, whose support is the spectrum $\sigma(A)$ of $A$, and the so-called {\it Borel operational calculus} assigning to any Borel measurable function $F:\sigma(A)\to \C$ a scalar type spectral operator
\begin{equation*}
F(A):=\int\limits_{\sigma(A)} F(\lambda)\,dE_A(\lambda)
\end{equation*}
(see \cite{Survey58,Dun-SchIII}).

In particular,
\begin{equation*}
A^n=\int\limits_{\sigma(A)} \lambda^n\,dE_A(\lambda),\ n\in\Z_+,
\end{equation*}
and
\begin{equation*}
e^{tA}:=\int\limits_{\sigma(A)} e^{t\lambda}\,dE_A(\lambda),\ t\in \R.
\end{equation*}

Provided
\[
\sigma(A)\subseteq \left\{\lambda\in\C\,\middle|\, \Rep\lambda\le \omega\right\},
\] 
with some $\omega\in \R$, the collection 
of exponentials $\left\{e^{tA}\right\}_{t\ge 0}$
coincides with the $C_0$-\textit{se\-migroup} generated by $A$ {\cite[Proposition $3.1$]{Markin2002(2)}} (see also \cite{Berkson1966,Panchapagesan1969}), and hence, if
\[
\sigma(A)\subseteq \left\{\lambda\in\C\,\middle|\, -\omega\le \Rep\lambda\le \omega\right\},
\] 
with some $\omega\ge 0$, the collection of exponentials
$\left\{e^{tA}\right\}_{t\in\R}$ 
coincides with the \textit{strongly continuous group} of bounded linear operators generated by $A$.

The orbit maps
\begin{equation}\label{expf1}
y(t)=e^{tA}f,\ t\ge 0,f \in \bigcap_{t\ge 0}D(e^{tA}),
\end{equation}
describe all \textit{weak/mild solutions} of the abstract evolution equation
\begin{equation}\label{+}
y'(t)=Ay(t),\ t\ge 0,
\end{equation}
\cite[Theorem $4.2$]{Markin2002(1)}, whereas the orbit maps
\begin{equation*}
y(t)=e^{tA}f,\ t\in \R,f \in \bigcap_{t\in \R}D(e^{tA}),
\end{equation*}
describe all \textit{weak/mild solutions} of the abstract evolution equation
\begin{equation}\label{1}
y'(t)=Ay(t),\ t\in \R,
\end{equation}
\cite[Theorem $7$]{Markin2018(3)} (see also \cite{Ball}). Such generalized solutions need not be differentiable in the strong sense and encompass the \textit{classical} ones, strongly differentiable and satisfying the corresponding equations in the traditional plug-in sense (cf. {\cite[Ch. II, Definition 6.3]{Engel-Nagel}}, see also {\cite[Preliminaries]{Markin2018(2)}}).

The subspaces
\[
C^\infty(A),\ \displaystyle \bigcap_{t\ge 0}D(e^{tA}),\ \text{and}\ \bigcap_{t\in \R}D(e^{tA}) 
\]
of all possible initial values for the orbits 
under $A$, $\left\{e^{tA}\right\}_{t\ge 0}$, and $\left\{e^{tA}\right\}_{t\in \R}$ are \textit{dense} in $(X,\|\cdot\|)$ since they contain the subspace
\begin{equation*}
\bigcup_{\alpha>0}E_A(\Delta_\alpha)X,\ \text{where}\ \Delta_\alpha:=\left\{\lambda\in\C\,\middle|\,|\lambda|\le \alpha \right\},\ \alpha>0,
\end{equation*}
which is dense in $(X,\|\cdot\|)$ and coincides with the class ${\mathscr E}^{\{0\}}(A)$ of the \textit{exponential type entire} vectors of the operator $A$ \cite{Markin2015} (see also  \cite{Radyno1983(1)}).

Due to its strong countable additivity, the spectral measure $E_A(\cdot)$ is bounded, i.e., there exists such an $M\ge 1$ that, for any Borel set $\delta\subseteq \C$,
\begin{equation}\label{bounded}
\|E_A(\delta)\|\le M
\end{equation}
\cite{Dun-SchI,Dun-SchIII}, the notation $\|\cdot\|$ being used here to designate the norm in the space $L(X)$ of all bounded linear operators on $X$. Adhering to this rather conventional economy of symbols hereafter, we also adopt the same notation for the norm in the dual space $X^*$.

For arbitrary $f\in X$ and $g^*\in X^*$, the \textit{total variation measure} $v(f,g^*,\cdot)$ of the complex-valued Borel measure $\langle E_A(\cdot)f,g^* \rangle$ ($\langle\cdot,\cdot\rangle$ is the {\it pairing} between the space $X$ and its dual $X^*$) is a {\it finite} positive Borel measure with
\begin{equation}\label{tv}
v(f,g^*,\C)=v(f,g^*,\sigma(A))\le 4M\|f\|\|g^*\|
\end{equation}
(see, e.g., \cite{Markin2004(1),Markin2004(2)}).

Also \cite{Markin2004(1),Markin2004(2)}, for any Borel measurable function $F:\C\to \C$, arbitrary $f\in D(F(A))$ and $g^*\in X^*$, and each Borel set $\delta\subseteq \C$,
\begin{equation}\label{cond(ii)}
\int\limits_\delta|F(\lambda)|\,dv(f,g^*,\lambda)
\le 4M\|E_A(\delta)F(A)f\|\|g^*\|.
\end{equation}
In particular, for $\delta=\sigma(A)$,
\begin{equation}\label{cond(i)}
\int\limits_{\sigma(A)}|F(\lambda)|\,d v(f,g^*,\lambda)\le 4M\|F(A)f\|\|g^*\|.
\end{equation}

Observe that the constant $M\ge 1$ in \eqref{tv}--\eqref{cond(i)} is from 
\eqref{bounded}.

\section[Main Results]{Main Results}

\begin{thm}\label{Thm1} 
An arbitrary scalar type spectral operator $A$ in a complex Banach space $(X,\|\cdot\|)$ with spectral measure $E_A(\cdot)$ is non-hypercyclic and so is the collection $\left\{e^{tA}\right\}_{t\ge 0}$ of its exponentials, which, provided the spectrum of $A$ is located in a left half-plane
\[
\left\{\lambda\in \C\,\middle|\,\Rep\lambda\le \omega \right\}
\]
with some $\omega\in\R$, coincides with the $C_0$-semigroup generated by $A$. 
\end{thm}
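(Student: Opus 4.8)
The plan is to argue by contradiction, exploiting the elementary fact that in any nontrivial normed space a \emph{dense} subset must be unbounded while, at the same time, accumulating at $0$. Suppose some nonzero $f$ is a hypercyclic vector for $A$, so that $\orb(f,A)=\{A^nf\}_{n\in\Z_+}$ is dense in $X$. Density forces two things: the orbit is unbounded, and, since $0\in X$, there is a subsequence $n_k$ along which $A^{n_k}f\to 0$. I would derive a contradiction by showing that the second property forces the orbit to be \emph{bounded}.

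The decisive spectral split is at the unit circle. Writing $\delta_+:=\{\lambda\in\C\mid|\lambda|\ge 1\}$ and $f_+:=E_A(\delta_+)f$, I would apply the variation estimate \eqref{cond(ii)} with $F(\lambda)=\lambda^n$ and $\delta=\delta_+$, using the commutation $E_A(\delta_+)A^n=A^nE_A(\delta_+)$ afforded by the operational calculus to rewrite $E_A(\delta_+)A^nf=A^nf_+$. Since $|\lambda|^n\ge 1$ on $\delta_+$, this gives
\[
v(f,g^*,\delta_+)\le\int_{\delta_+}|\lambda|^n\,dv(f,g^*,\lambda)\le 4M\|A^nf_+\|\,\|g^*\|,\qquad g^*\in X^*.
\]
As $\|A^{n_k}f_+\|=\|E_A(\delta_+)A^{n_k}f\|\le M\|A^{n_k}f\|\to 0$, passing to the limit yields $v(f,g^*,\delta_+)=0$ for every $g^*$, hence $f_+=E_A(\delta_+)f=0$; thus each measure $v(f,g^*,\cdot)$ is carried by $\{|\lambda|<1\}$, where $|\lambda|^n\le 1$. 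The spectral representation \eqref{power}, the majorization $|\langle A^nf,g^*\rangle|\le\int|\lambda|^n\,dv(f,g^*,\lambda)$, and the uniform bound \eqref{tv} then give
\[
|\langle A^nf,g^*\rangle|\le\int_{\{|\lambda|<1\}}|\lambda|^n\,dv(f,g^*,\lambda)\le v(f,g^*,\sigma(A))\le 4M\|f\|\,\|g^*\|
\]
for all $n$ and $g^*$, i.e. $\|A^nf\|\le 4M\|f\|$. The orbit is bounded, contradicting density.

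For the collection $\{e^{tA}\}_{t\ge 0}$ the reasoning is verbatim once $|\lambda|$ is replaced by $|e^{t\lambda}|=e^{t\Rep\lambda}$ and the threshold radius $1$ by the threshold line $\Rep\lambda=0$. I would split the spectrum along the imaginary axis, set $f_+:=E_A(\{\Rep\lambda\ge 0\})f$, and use \eqref{cond(ii)} with $F(\lambda)=e^{t\lambda}$ along a sequence $t_k\ge 0$ with $e^{t_kA}f\to 0$ (again supplied by density, since $0$ lies in the closure of the orbit). Because $e^{t\Rep\lambda}\ge 1$ for $t\ge 0$ on $\{\Rep\lambda\ge 0\}$, this forces $E_A(\{\Rep\lambda\ge 0\})f=0$; on the remaining set $\{\Rep\lambda<0\}$ one has $e^{t\Rep\lambda}\le 1$ for $t\ge 0$, so \eqref{exp} and \eqref{tv} give $\|e^{tA}f\|\le 4M\|f\|$ for all $t\ge 0$, again a bounded, hence non-dense, orbit. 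The closing assertion that $\{e^{tA}\}_{t\ge 0}$ coincides with the $C_0$-semigroup generated by $A$ when $\sigma(A)\subseteq\{\Rep\lambda\le\omega\}$ needs no argument here: it is the cited \cite[Proposition $3.1$]{Markin2002(2)} recalled in the Preliminaries.

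The one genuinely delicate step --- the one I would treat most carefully --- is the passage from \emph{the orbit accumulates at $0$} to \emph{$f$ is supported on the non-expanding part of the spectrum}. It rests on three ingredients: applying \eqref{cond(ii)} precisely on the expanding spectral set, where the integrand $|F(\lambda)|$ is bounded below by $1$; the commutation of the spectral projections with the operational calculus together with the uniform bound $\|E_A(\delta)\|\le M$ from \eqref{bounded}, which propagates the convergence $A^{n_k}f\to 0$ (resp. $e^{t_kA}f\to 0$) to the projected orbit; and the fact that vanishing total variation on $\delta_+$ forces $E_A(\delta_+)f=0$. Everything after that is the routine estimate $|\langle F(A)f,g^*\rangle|\le\int_{\sigma(A)}|F|\,dv(f,g^*,\cdot)$ combined with \eqref{tv}.
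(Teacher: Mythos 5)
Your proposal is correct and takes essentially the same route as the paper: the same spectral split at the unit circle (respectively, at the imaginary axis $\Rep\lambda=0$) and the same estimates \eqref{cond(ii)}, \eqref{cond(i)}, and \eqref{tv} to show that the orbit is bounded away from $0$ on the expanding spectral part and bounded on the non-expanding part. The only difference is organizational: you argue by contradiction from the orbit's accumulation at $0$, which is precisely the contrapositive of the paper's direct dichotomy on whether $E_A\left(\left\{\lambda\in\sigma(A)\,\middle|\,|\lambda|>1\right\}\right)f$ (resp.\ $E_A\left(\left\{\lambda\in\sigma(A)\,\middle|\,\Rep\lambda>0\right\}\right)f$) vanishes.
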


\begin{proof}
Let $f\in C^\infty(A)\setminus \{0\}$ be arbitrary.

There are two possibilities: either
\[
E_A\left(\left\{\lambda\in \sigma(A)\,\middle|\, |\lambda|>1\right\}\right)f\neq 0
\]
or
\[
E_A\left(\left\{\lambda\in \sigma(A)\,\middle|\, |\lambda|>1\right\}\right)f=0.
\]

In the first case, as follows from the {\it Hahn-Banach Theorem} (see, e.g., \cite{Dun-SchI}), there exists a functional $g^*\in X^*\setminus \{0\}$ such that
\begin{equation*}
\langle E_A\left(\left\{\lambda\in \sigma(A)\,\middle|\, |\lambda|>1\right\}\right)f,g^*\rangle\neq 0
\end{equation*}
and hence, for any $n\in \Z_+$,
\begin{multline*}
\|A^nf\|
\\
\hfill
\text{by \eqref{cond(i)};}
\\
\shoveleft{
\ge \left[4M\|g^*\|\right]^{-1}\int\limits_{\sigma(A)}|\lambda|^n\,dv(f,g^*,\lambda)
\ge \left[4M\|g^*\|\right]^{-1}
\int\limits_{\{\lambda\in\sigma(A)||\lambda|>1\}}|\lambda|^n\,dv(f,g^*,\lambda)
}\\
\shoveleft{
\ge 
\left[4M\|g^*\|\right]^{-1}v(f,g^*,\{\lambda\in\sigma(A)||\lambda|>1\})
}\\
\ \ \
\ge 
\left[4M\|g^*\|\right]^{-1}
|\langle E_A(\{\lambda\in\sigma(A)||\lambda|>1\})f,g^*\rangle|
>0,
\hfill
\end{multline*}
which implies that the orbit $\left\{A^nf\right\}_{n\in\Z_+}$ of $f$ under $A$ cannot approximate the zero vector, and hence, is not dense in $(X,\|\cdot\|)$.

In the second case, since
\[
f=E_A\left(\left\{\lambda\in \sigma(A)\,\middle|\, |\lambda|>1\right\}\right)f
+
E_A\left(\left\{\lambda\in \sigma(A)\,\middle|\, |\lambda|\le 1\right\}\right)f,
\]
we infer that
\[
f=E_A\left(\left\{\lambda\in \sigma(A)\,\middle|\, |\lambda|\le 1\right\}\right)f\neq 0
\]
and hence, for any $n\in \Z_+$,
\begin{multline*}
\left\|A^nf\right\|
\\
\hfill
\text{by the properties of the \textit{operational calculus};}
\\
\shoveleft{
=
\left\|\int\limits_{\{\lambda\in\sigma(A)||\lambda|\le 1\}}
\lambda^n\,dE_A(\lambda)f\right\|
}\\
\hfill
\text{as follows from the \textit{Hahn-Banach Theorem};}
\\
\shoveleft{
=\sup_{\{g^*\in X^*|\|g^*\|=1\}}
\left|\left\langle
\int\limits_{\{\lambda\in\sigma(A)||\lambda|\le 1\}}
\lambda^n\,d E_A(\lambda)f,g^*\right\rangle
\right|
}\\
\hfill
\text{by the properties of the \textit{operational calculus};}
\\
\shoveleft{
= \sup_{\{g^*\in X^*|\|g^*\|=1\}}
\left|\int\limits_{\{\lambda\in\sigma(A)||\lambda|\le 1\}}
\lambda^n\,d\langle E_A(\lambda)f,g^*\rangle\right|
}\\
\shoveleft{
\le \sup_{\{g^*\in X^*|\|g^*\|=1\}}\int\limits_{\{\lambda\in\sigma(A)||\lambda|\le 1\}}
|\lambda|^n\,dv(f,g^*,\lambda) 
}\\
\shoveleft{
\le \sup_{\{g^*\in X^*|\|g^*\|=1\}}\int\limits_{\{\lambda\in\sigma(A)||\lambda|\le 1\}}
1\,dv(f,g^*,\lambda) 
}\\
\hfill
\text{by \eqref{cond(ii)} with $F(\lambda)\equiv 1$};
\\
\shoveleft{
\le \sup_{\{g^*\in X^*|\|g^*\|=1\}}4M\left\|E_A(\{\lambda\in\sigma(A)|
|\lambda|\le 1\})f\right\|\|g^*\|
}\\
\ \ \
= 4M\left\|E_A(\{\lambda\in\sigma(A)|
|\lambda|\le 1\})f\right\|.
\hfill
\end{multline*}
which also implies that the orbit
$\left\{A^nf\right\}_{n\in\Z_+}$ of $f$ under $A$, being bounded, is not dense in $(X,\|\cdot\|)$ and completes the proof for the case of the operator.

\smallskip
Now, let us consider the case of the exponential collection $\left\{e^{tA}\right\}_{t\ge 0}$ assuming that $\displaystyle f \in \bigcap_{t\ge 0}D(e^{tA})\setminus \{0\}$ is arbitrary.

There are two possibilities: either
\[
E_A\left(\left\{\lambda\in \sigma(A)\,\middle|\, \Rep\lambda>0\right\}\right)f\neq 0
\]
or
\[
E_A\left(\left\{\lambda\in \sigma(A)\,\middle|\, \Rep\lambda>0\right\}\right)f=0.
\]

In the first case, as follows from the {\it Hahn-Banach Theorem}, there exists a functional $g^*\in X^*\setminus \{0\}$ such that
\begin{equation*}
\langle E_A\left(\left\{\lambda\in \sigma(A)\,\middle|\, \Rep\lambda>0\right\}\right)f,g^*\rangle\neq 0
\end{equation*}
and hence, for any $t\ge 0$,
\begin{multline*}
\|e^{tA}f\|
\\
\hfill
\text{by \eqref{cond(i)};}
\\
\shoveleft{
\ge \left[4M\|g^*\|\right]^{-1}\int\limits_{\sigma(A)}\left|e^{t\lambda}\right|\,dv(f,g^*,\lambda)
}\\
\shoveleft{
\ge \left[4M\|g^*\|\right]^{-1}
\int\limits_{\{\lambda\in\sigma(A)|\Rep\lambda>0\}}e^{t\Rep\lambda}\,dv(f,g^*,\lambda)
}\\
\hfill
\text{since for $t\ge 0$ and $\lambda\in \sigma(A)$ with $\Rep\lambda>0$, $e^{t\Rep\lambda}\ge 1$;}
\\
\shoveleft{
\ge 
\left[4M\|g^*\|\right]^{-1}v(f,g^*,\{\lambda\in\sigma(A)|\Rep\lambda>0\})
}\\
\ \ \,
\ge 
\left[4M\|g^*\|\right]^{-1}
|\langle E_A(\{\lambda\in\sigma(A)|\Rep\lambda>0\})f,g^*\rangle|
>0,
\hfill
\end{multline*}
which implies that the orbit $\left\{e^{tA}f\right\}_{t\ge 0}$ of $f$ cannot approximate the zero vector, and hence, is not dense in $(X,\|\cdot\|)$.

In the second case, since
\[
f=E_A\left(\left\{\lambda\in \sigma(A)\,\middle|\, \Rep\lambda>0\right\}\right)f
+
E_A\left(\left\{\lambda\in \sigma(A)\,\middle|\, \Rep\lambda\le 0\right\}\right)f,
\]
we infer that
\[
f=E_A\left(\left\{\lambda\in \sigma(A)\,\middle|\, \Rep\lambda\le 0\right\}\right)f\neq 0
\]
and hence, for any $t\ge 0$,
\begin{multline*}
\left\|e^{tA}f\right\|
\\
\hfill
\text{by the properties of the \textit{operational calculus};}
\\
\shoveleft{
=
\left\|\int\limits_{\{\lambda\in\sigma(A)|\Rep\lambda\le 0\}}
e^{t\lambda}\,dE_A(\lambda)f\right\|
}\\
\hfill
\text{as follows from the \textit{Hahn-Banach Theorem};}
\\
\shoveleft{
=\sup_{\{g^*\in X^*|\|g^*\|=1\}}
\left|\left\langle
\int\limits_{\{\lambda\in\sigma(A)|\Rep\lambda\le 0\}}
e^{t\lambda}\,d E_A(\lambda)f,g^*\right\rangle
\right|
}\\
\hfill
\text{by the properties of the \textit{operational calculus};}
\\
\shoveleft{
= \sup_{\{g^*\in X^*|\|g^*\|=1\}}
\left|\int\limits_{\{\lambda\in\sigma(A)|\Rep\lambda\le 0\}}
e^{t\lambda}\,d\langle E_A(\lambda)f,g^*\rangle\right|
}\\
\shoveleft{
\le \sup_{\{g^*\in X^*|\|g^*\|=1\}}\int\limits_{\{\lambda\in\sigma(A)|\Rep\lambda\le 0\}}
\left|e^{t\lambda}\right|\,dv(f,g^*,\lambda) 
}\\
\shoveleft{
=\sup_{\{g^*\in X^*|\|g^*\|=1\}}\int\limits_{\{\lambda\in\sigma(A)|\Rep\lambda\le 0\}}
e^{t\Rep\lambda}\,dv(f,g^*,\lambda) 
}\\
\hfill
\text{since for $t\ge 0$ and $\lambda\in \sigma(A)$ with $\Rep\lambda\le 0$, $e^{t\Rep\lambda}\le 1$;}
\\
\shoveleft{
\le \sup_{\{g^*\in X^*|\|g^*\|=1\}}\int\limits_{\{\lambda\in\sigma(A)|\Rep\lambda\le 0\}}
1\,dv(f,g^*,\lambda) 
}\\
\hfill
\text{by \eqref{cond(ii)} with $F(\lambda)\equiv 1$};
\\
\shoveleft{
\le \sup_{\{g^*\in X^*|\|g^*\|=1\}}4M\left\|E_A(\{\lambda\in\sigma(A)|
\Rep\lambda\le 0\})f\right\|\|g^*\|
}\\
\ \ \
= 4M\left\|E_A(\{\lambda\in\sigma(A)|
\Rep\lambda\le 0\})f\right\|,
\hfill
\end{multline*}
which also implies that the orbit
$\left\{e^{tA}f\right\}_{t\ge 0}$ of $f$, being bounded, is not dense in $(X,\|\cdot\|)$ and completes the entire proof.
\end{proof}

\begin{rem}
Now, \cite[Theorem $1$]{Mark-Sich2019(1)} is the important particular case of Theorem \ref{Thm1} for
a (bounded or unbounded) \textit{normal operator} in a complex Hilbert space.
\end{rem} 

If further for a scalar type spectral operator $A$ in a complex Banach space $(X,\|\cdot\|)$, we have the inclusion:
\[
\sigma(A)\subseteq i\R,
\]
by {\cite[Theorem XVIII.$2.11$ (c)]{Dun-SchIII}}, for any $t\in \R$,
\[
\|e^{tA}\|
=\left\|\int\limits_{\sigma(A)}
e^{t\lambda}\,dE_A(\lambda)\right\|\le 4M\sup_{\lambda\in\sigma(A)}\left|e^{t\lambda}\right|
=4M\sup_{\lambda\in\sigma(A)}e^{t\Rep\lambda}=4M,
\]
where the constant $M\ge 1$ is from 
\eqref{bounded}. Therefore, the strongly continuous  group $\left\{e^{tA}\right\}_{t\in \R}$ of bounded linear operators generated by $A$ is \textit{bounded} (cf. \cite{Berkson1966}), which implies that every orbit $\left\{e^{tA}f\right\}_{t\in \R}$, $f\in X$,
is bounded, and hence, is not dense in $(X,\|\cdot\|)$. Thus, we arrive at the following

\begin{prop}\label{Prop1} 
For a scalar type spectral operator $A$ in a complex Banach space $(X,\|\cdot\|)$ with $\sigma(A)\subseteq i\R$, the strongly continuous  group $\left\{e^{tA}\right\}_{t\in \R}$ of bounded linear operators generated by $A$ is bounded, and hence, non-hypercyclic. 
\end{prop}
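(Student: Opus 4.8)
The plan is to exploit the structure the hypothesis $\sigma(A)\subseteq i\R$ imposes on the operational calculus. Since every $\lambda\in\sigma(A)$ has $\Rep\lambda=0$, the integrand in the representation \eqref{exp} satisfies $\left|e^{t\lambda}\right|=e^{t\Rep\lambda}=1$ on the entire support of the spectral measure, and this holds uniformly in $t\in\R$. This is the crux of the matter: the function $\lambda\mapsto e^{t\lambda}$ is not merely bounded on $\sigma(A)$ but has sup-norm exactly $1$ there, \emph{independently of} $t$.

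First I would invoke the operational-calculus norm estimate of the form $\|F(A)\|\le 4M\sup_{\lambda\in\sigma(A)}|F(\lambda)|$ for a bounded Borel function $F$ on $\sigma(A)$ (see \cite[Theorem XVIII.$2.11$ (c)]{Dun-SchIII}), with $M\ge 1$ the bound on the spectral measure from \eqref{bounded}. Applying it to $F(\lambda)=e^{t\lambda}$ and using the observation above yields $\|e^{tA}\|\le 4M$ for every $t\in\R$. Thus the strongly continuous group $\left\{e^{tA}\right\}_{t\in\R}$ is uniformly bounded in the operator norm of $L(X)$.

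From uniform boundedness the non-hypercyclicity is immediate. For any $f\in X$, the orbit $\left\{e^{tA}f\right\}_{t\in\R}$ is contained in the closed ball of radius $4M\|f\|$ and is therefore a bounded subset of $X$. A bounded set cannot be dense in a nontrivial normed space, which is always unbounded, so no vector $f$ can have a dense orbit and the group admits no hypercyclic vectors.

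I expect essentially no obstacle here beyond correctly recording that the exponential is bounded with $t$-independent sup-norm on the imaginary axis; the only quantitative input is the operational-calculus estimate, whose factor $4M$ traces back to the total-variation bound \eqref{tv}. It is worth noting that, unlike the $C_0$-semigroup case treated in Theorem \ref{Thm1}, here one need not split the spectrum according to the sign of $\Rep\lambda$: the hypothesis $\sigma(A)\subseteq i\R$ collapses both regimes into a single bounded estimate, so one uniform bound over all of $\sigma(A)$ and all $t\in\R$ settles the claim at once.
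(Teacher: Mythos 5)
Your proposal is correct and follows essentially the same route as the paper: both invoke the operational-calculus bound $\|e^{tA}\|\le 4M\sup_{\lambda\in\sigma(A)}\left|e^{t\lambda}\right|=4M$ via \cite[Theorem XVIII.$2.11$ (c)]{Dun-SchIII}, using that $\Rep\lambda=0$ on $\sigma(A)\subseteq i\R$, and then conclude that uniformly bounded orbits cannot be dense. No discrepancies to report.
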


As is known \cite{Stone1932}, for an \textit{anti-self-adjoint operator} $A$ in a complex Hilbert space, $\sigma(A)\subseteq i\R$ and the generated by $A$ strongly continuous operator group
$\left\{e^{tA}\right\}_{t\in \R}$ is \textit{unitary}, which, in particular, implies that
\[
\|e^{tA}\|=1,\ t\in \R.
\]

Thus, from Theorem \ref{Thm1} (see also \cite[Theorem $1$]{Mark-Sich2019(1)}) and Proposition \ref{Prop1}, we derive the following corollary.

\begin{cor}[The Case of an Anti-Self-Adjoint Operator]\label{Cor1}\ \\
An anti-self-adjoint operator $A$ in a complex Hilbert space is non-hypercyclic and so is the generated by $A$ strongly continuous unitary operator group $\left\{e^{tA}\right\}_{t\in \R}$. 
\end{cor}

\section{An Application}

Since, in the complex Hilbert space $L_2({\mathbb R})$, the differentiation operator $A:=\dfrac{d}{dx}$ with the domain
\[
W_2^1({\mathbb R}):=\left\{f\in L_2(\R)\middle|f(\cdot)\ 
\text{is \textit{absolutely continuous} on $\R$ with}\ f'\in L_2(\R) \right\}
\] 
is \textit{anti-self-adjoint} (see, e.g., \cite{Akh-Glaz}), by Corollary \ref{Cor1}, we obtain

\begin{cor}[The Case of Differentiation Operator]\label{Cor2}\ \\
In the complex Hilbert space $L_2({\mathbb R})$, the differentiation operator $A:=\dfrac{d}{dx}$ with the domain $D(A):=W_2^1({\mathbb R})$ is non-hypercyclic and so is the left-translation strongly continuous unitary operator group generated by $A$.
\end{cor}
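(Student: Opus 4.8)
The plan is to reduce the statement to Corollaries \ref{Cor1} and \ref{Cor2} by verifying the two classical structural facts on which the reduction rests: that $A := d/dx$ with domain $W_2^1(\R)$ is anti-self-adjoint in $L_2(\R)$, and that the strongly continuous group it generates is the left-translation group $(e^{tA}f)(x) = f(x+t)$. Once these are in place, both assertions of the corollary are immediate invocations of the preceding results.

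First I would establish the anti-self-adjointness. The cleanest route is via the Fourier transform $\mathcal{F}$ on $L_2(\R)$: with the convention $\widehat{f'}(\xi)=i\xi\hat f(\xi)$, the operator $A$ is unitarily equivalent to multiplication by the purely imaginary function $\xi\mapsto i\xi$ on its maximal domain $\left\{u\in L_2(\R)\,\middle|\,\xi u(\xi)\in L_2(\R)\right\}$, which is precisely $\mathcal{F}\,W_2^1(\R)$ since $f\in W_2^1(\R)$ iff $\xi\hat f\in L_2(\R)$. As multiplication by a real measurable function is self-adjoint on its maximal domain, multiplication by $i\xi$ is anti-self-adjoint, and hence so is $A$ (see, e.g., \cite{Akh-Glaz}). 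The same equivalence gives $\sigma(A)=i\R$, consistent with the hypotheses of Proposition \ref{Prop1} and Corollary \ref{Cor2}.

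Next I would identify the generated group. Under $\mathcal{F}$ the collection $\left\{e^{tA}\right\}_{t\in\R}$ becomes multiplication by $e^{it\xi}$, the Fourier multiplier implementing translation; transforming back yields $(e^{tA}f)(x)=f(x+t)$, the left-translation group, which is strongly continuous and unitary \cite{Engel-Nagel,Stone1932}.

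With these facts recorded the conclusion follows at once. Since an anti-self-adjoint operator is in particular normal, Corollary \ref{Cor1} shows that $A$ is not hypercyclic; and since $A$ is anti-self-adjoint, Corollary \ref{Cor2} shows that the left-translation group $\left\{e^{tA}\right\}_{t\in\R}$ it generates is unitary, and hence non-hypercyclic. I do not expect any genuine obstacle here: the entire non-routine content is the verification of anti-self-adjointness and the identification of the generated group, both of which are classical, so the proof amounts to citing the right structural facts and applying the corollaries already proved.
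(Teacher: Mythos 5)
Your proposal is correct and follows essentially the same route as the paper: reduce to Corollaries \ref{Cor1} and \ref{Cor2} by invoking the anti-self-adjointness of $\dfrac{d}{dx}$ on $W_2^1(\R)$ and the identification of the generated group as the unitary left-translation group. The only difference is cosmetic --- you sketch the Fourier-transform verification of these classical facts, whereas the paper simply cites \cite{Akh-Glaz}, \cite{Engel-Nagel}, and \cite{Stone1932}.
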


\begin{rem}
In a different setting, the situation with the differentiation operator can be vastly different
(cf. \cite[Example $2.21$]{Grosse-Erdmann-Manguillot}, \cite[Corollary $2.3$]{B-Ch-S2001}, \cite[Corollary $4.1$]{E-H2005}, and \cite[Theorem $3.1$]{B-B-T2008}).
\end{rem}

\section{Concluding Remark}

The exponentials given by \eqref{expf1} describing all \textit{weak/mild solutions} of evolution equation \eqref{+} (see Preliminaries), Theorem \ref{Thm1}, in particular, implies that such an equation is void of chaos (see \cite{Grosse-Erdmann-Manguillot}). By Proposition \ref{Prop1} (see also Preliminaries), the same is true for evolution equation \eqref{1} provided $\sigma(A)\subseteq i\R$.

\section{Acknowledgments}

I would like to express sincere gratitude to Dr.~Oscar Vega of the Department of Mathematics, California State University, Fresno, for his gift of book \cite{Guirao-Montesinos-Zizler} reading which inspired the above findings. My utmost appreciation goes to Professor Karl Grosse-Erdmann of the Universit\'e de Mons Institut de Math\'ematique  for his interest in my endeavors in the realm of unbounded hypercyclicity and chaoticity, turning my attention to the existing research on the subject of unbounded linear hypercyclicity and chaos, and kindly communicating certain relevant references.

 
\end{document}